\documentclass[12pt,titlepage]{amsart}
\usepackage{amsopn}
\usepackage{amssymb}
\usepackage{amsaddr}
\usepackage{graphicx}
\usepackage[all]{xy}
\usepackage{amscd}
\usepackage{color}
\usepackage[colorlinks]{hyperref}
\usepackage{amsmath,amsthm,amsfonts,amssymb}
\usepackage{verbatim}
\usepackage{latexsym}

\newtheorem{thm}{Theorem}[section]

 \newtheorem{prop}[thm]{Proposition}
 \newtheorem{rem}[thm]{Remark}
 \newtheorem{defn}[thm]{Definition}
 \newtheorem{exam}[thm]{Example}
 
 \theoremstyle{definition}
 \theoremstyle{remark}

\begin{document}
\title[AR-frames]
{Adaptive Resolution Frames: A Multilevel Framework in Hilbert
Spaces }

\author{Jahangir Cheshmavar}
\author[J. Cheshmavar]{}

\address{Jahangir Cheshmavar, Department of Mathematics,
Payame Noor University, P.O.Box 19395-4697, Tehran, Iran \\
\emph{j$_{_-}$cheshmavar@pnu.ac.ir}\\
}
\thanks{\it 2020 Mathematics Subject Classification. 42C15.}
\keywords{adaptive resolution frame, diagonal operator, level-wise
reconstruction, stability, weighted reconstruction.\\
}
 \dedicatory{} \commby{}
\begin{abstract}
We introduce adaptive resolution frames ($AR$-frames), a class of
weighted frames equipped with a partitioned index structure that
models multiple resolution levels in Hilbert spaces. We establish
a block-triangular representation of the associated operators and
prove the convergence of an iterative reconstruction method with
explicit error estimates. We further derive optimal block-diagonal
preconditioners that minimize the iteration error and obtain
stability results showing that $AR$-frames and their canonical
reconstruction operators remain stable under sufficiently small
weighted perturbations. These results provide a unified framework
for efficient reconstruction and perturbation analysis of
$AR$-frames.
\end{abstract}
\maketitle

\section{\textbf{Introduction and Preliminaries}}\label{Sec1}
Frames in Hilbert spaces play a fundamental role in modern
harmonic analysis, signal processing, sampling theory, and data
representation due to their ability to provide stable and
redundant expansions of vectors (see, e.g.,
\cite{Benedetto.1995,Christensen.2023,Eldar.2012,Heil.2011}).
Since the pioneering work of Duffin and Schaeffer
\cite{Duffin.1952}, frame theory has evolved into a powerful
mathematical framework with applications ranging from wavelet
analysis and image compression (see, e.g.,
\cite{Benedetto.1998,Meyer.1997}) to wireless communications and
machine learning \cite{Rajupillai.2019,Strohmer.2001}. Unlike
orthonormal bases, frames allow redundancy, which offers
robustness against noise, perturbations, and data loss while
preserving stable reconstruction properties. In many practical
applications, however, information is naturally organized across
multiple scales or resolution levels. Examples arise in weighted
or partitioned frames, multi-resolution image analysis,
multi-scale signal decomposition, adaptive sensing, and
coarse-to-fine numerical approximations, (see, e.g.,
\cite{Balazs.2010,Balazs.2026,Cai.2012,Mallat.1999}). Classical
frame theory does not explicitly encode this layered structure.
Motivated by these considerations, we introduce and study a new
class of frames, called adaptive resolution frames ($AR$-frames),
which incorporate both weighted coefficients and a partitioned
index structure representing different resolution levels. An
$AR$-frame consists of a family of vectors indexed by a countable
set decomposed into subsets corresponding to distinct scales.
Positive weights are assigned to the frame elements, allowing
different resolution levels to contribute adaptively to the
representation of signals, and the partition structure permits
coarse and fine components of a signal to be analyzed separately,
making $AR$-frames particularly suitable for adaptive
approximation and denoising problems (Definition \ref{def.1}). The
multi-resolution structure allows reconstruction formulas to be
performed level by level through truncated operators, yielding
approximations that converge in norm to the original signal.
Despite their usefulness, two central challenges remain in the
theory of $AR$-frames:
\begin{itemize}
\item[(i)] developing stable level-wise reconstruction methods
under multi-scale decompositions and inter-level operator
interactions. \item[(ii)] ensuring stability under perturbation,
since small changes in the frame elements or weights may affect
the frame property and reconstruction operators, thereby
compromising reliability in practical applications.
\end{itemize}
We first establish a block decomposition of the associated frame
operators and derive a Neumann-type iterative reconstruction
scheme with geometric convergence (Proposition \ref{Prop.1}). We
next present the stability of adaptive level-wise preconditioning
and derives the optimal scaling that minimizes the worst-case
reconstruction error. (Proposition \ref{Prop.22}). Another central
aspect of this work is the stability analysis of $AR$-frames. We
prove that sufficiently small perturbations of the frame vectors
preserve the $AR$-frame property and derive explicit estimates for
the perturbed frame bounds (Proposition \ref{Prop.2}). In
addition, we investigate perturbations at the level of weighted
analysis operators and show that the associated reconstruction
operators admit convergent Neumann-series representations
(Proposition \ref{Prop.3}). These results guarantee the robustness
of $AR$-frame reconstructions under noise and approximation
errors, which is essential for practical applications. To
demonstrate the practical relevance of the theory, we present
finite-dimensional examples illustrating adaptive reconstruction
and denoising. These examples highlight the usefulness of
$AR$-frames in signal recovery and multi-scale
data representation.\\

\textbf{Motivation and Novelty}

Classical frames provide stable expansions but lack flexibility
for multi-resolution or weighted representations, which are
crucial in applications such as signal denoising, image
compression, and adaptive feature extraction. $AR$-frames address
this by allowing level-wise partitions and adaptive weights,
enabling more accurate, scale-sensitive approximations while
handling structured data effectively. $AR$-frames combine
weighted, multi-resolution representations with robust
perturbation stability. Compared to classical frames, $g$-frames,
and scalable frames, they enable adaptive level-wise
reconstruction, error control, and operator-compatible
decompositions, making them more flexible and computationally
efficient for signal and data analysis.

Throughout the paper, $\mathcal{H}$ is a separable Hilbert space
with inner product $\langle \cdot, \cdot \rangle$ and norm $\|
\cdot \|$, $\mathcal{B}(\mathcal{H})$ is the collection of all
bounded linear operators on $\mathcal{H}$, and $I$ is a countable
index set, possibly partitioned as $I = \bigcup_{k=1}^\infty I_k$.

\begin{defn} \label{def.1}
The family $\{ f_i \}_{i \in I} \subset \mathcal{H}$, equipped
with positive weights $\{ \lambda_i \}_{i \in I}$ and a partition
of the index set $I = \bigcup_{k=1}^\infty I_k$ with $I_k \cap I_j
= \emptyset \text{ for } k \neq j$ is an adaptive resolution frame
(AR-frame) for $\mathcal{H}$ if, there exist constants $A, B> 0$
such that
\begin{eqnarray}\label{form1}
 A \|x\|^2 \leq \sum_{k=1}^\infty
\sum_{i \in I_k} \lambda_i | \langle x, f_i \rangle |^2 \leq B
\|x\|^2, \,\ \forall x \in \mathcal{H}.
 \end{eqnarray}
The constants $A$ and $B$ are called the frame bounds.
\end{defn}
An $AR$-frame $\{ f_i \}_{i \in I} \subset \mathcal{H}$ with
weights $\{ \lambda_i \}_{i \in I}$ and index partition
$\{I_k\}_{k=1}^\infty$ is tight if
\begin{eqnarray*}
\sum_{k=1}^\infty \sum_{i \in I_k} \lambda_i | \langle x, f_i
\rangle |^2 = A \|x\|^2, \quad \forall x \in \mathcal{H},
\end{eqnarray*}
for some constant $A > 0$. It is a Parseval $AR$-frame if $A=1$.
\begin{rem}
(i) each subset $I_k$ can be thought of as corresponding to a
level of resolution or scale, multi-scale structure through the
partition $\{ I_k \}$, allowing different groups of vectors to
operate at different resolutions. (ii) If all $\lambda_i = 1$ and
set just one resolution level $I_k = I$, then an AR-frame reduces
to a classical frame in $\mathcal{H}$.
\end{rem}

Develop now the concept of dual $AR$-frames, which allow for
reconstruction of elements in $\mathcal{H}$. Let $\{f_i\}_{i\in
I}\subset\mathcal H$ be an $AR$-frame with orthogonal
decomposition
\begin{eqnarray*}
\mathcal H=\bigoplus_{k=1}^{\infty}\mathcal H_k, \qquad \mathcal
H_k=\overline{\operatorname{span}}\{f_i:i\in I_k\},
\end{eqnarray*}
where each $\{f_i\}_{i\in I_k}$ is a frame for $\mathcal H_k$.
Define the level-wise frame operators
\begin{eqnarray*}
S_k:\mathcal{H}_k \rightarrow \mathcal{H}_k, \,\ S_kx = \sum_{i
\in I_k} \lambda_i \langle x, f_i \rangle f_i, \,\ \forall x \in
\mathcal{H}_k,
\end{eqnarray*}
and the full $AR$-frame operator $S:=\sum_{k=1}^{\infty}S_k$,
where the sum converges in the strong operator topology. The
operator $S$ is bounded, self-adjoint, positive, and invertible.
This follows from the $AR$-frame inequality and the fact that all
$\lambda_i > 0$. For every $N \in \mathbb{N}$, define
\begin{eqnarray*}
S^{(N)} := \sum_{k=1}^N S_k,
\end{eqnarray*}
then $0 \leq S^{(N)} \leq S$, and the sequence $\{S^{(N)}\}_{N \in
\mathbb{N}}$ is an increasing sequence of positive operators
strongly converging to $S$. It is note that, there is no guarantee
that $S^{(N)}$ is invertible on all of $\mathcal H$. In general,
the first $N$ levels may fail to span $\mathcal H$, it is
invertible only on
\begin{eqnarray*}
\mathcal H^{(N)} := \overline{\operatorname{span}} \{f_i:i\in
I_1\cup\cdots\cup I_N\}.
\end{eqnarray*}
The canonical dual $AR$-frame vectors $\tilde{f}_i :=
S^{-1}(\lambda_if_i)$ admit the decomposition as follows:
\begin{eqnarray}
\tilde{f}_i = \lim_{N \to \infty} (S^{(N)})^{-1} (\lambda_if_i)
\quad \text{for }
i \in I_N,
\end{eqnarray}
reflecting the multi-resolution structure and allowing adaptive
level-wise inversion. We can approximate reconstruction by
truncated levels, for each $x \in H$, the truncated partial
reconstructions
\begin{eqnarray}
x_N :=  \sum_{k=1}^N \sum_{i \in I_k} \lambda_i \langle x, f_i
\rangle (S^{(N)})^{-1}f_i ,
\end{eqnarray}
converge in norm to $x$ as $N \to \infty$.\\

Here are concrete simple example showing how an $AR$-frame can be
used to denoise a signal by truncating higher resolution levels.

\begin{exam} \label{exe.1}
Consider a signal $x \in \mathbb{R}^2$ with noise at the fine
resolution level. We use an $AR$-frame to reconstruct a denoised
approximation. Define a simple $AR$-frame in
$\mathbb{R}^2$, $f_1 = \begin{bmatrix}1 \\
0\end{bmatrix}, \,\ f_2 = \begin{bmatrix}0 \\ 1\end{bmatrix},
\mbox{and}\,\ f_3 = \frac{1}{\sqrt{2}} \begin{bmatrix}1 \\
1\end{bmatrix}$, with resolution weights $\lambda_1 = 1, \,\
\lambda_2 = 1$, and $\lambda_3 = 2$. Consider partition into
resolution levels, $I_1 = \{1, 2\}$ (coarse), and $I_2 = \{3\}$
(fine). Let the original signal be $x_{\text{orig.}} =
\begin{bmatrix}3 \\ 1\end{bmatrix}$. Suppose high-frequency noise
corrupts the fine component $f_3$. We add noise $n = \alpha \cdot
f_3$, with $\alpha = 5$. So the observed signal is
\begin{eqnarray*}
x = x_{\text{orig.}} + n = \begin{bmatrix}3 \\
1\end{bmatrix} + 5 \cdot \frac{1}{\sqrt{2}} \begin{bmatrix}1 \\
1\end{bmatrix} \approx \begin{bmatrix}6.54 \\
4.54\end{bmatrix}.
\end{eqnarray*}
By compute $Sx$ for $x =
\begin{bmatrix}x_1 \\ x_2\end{bmatrix}$, we have that
\begin{eqnarray*}
S x = \sum_{i=1}^3 \lambda_i \langle x,f_i\rangle f_i&=&\lambda_1
x_1 f_1 + \lambda_2 x_2 f_2 + \lambda_3 \left(
\frac{x_1 + x_2}{\sqrt{2}} \right) f_3\\
&=& x_1\begin{bmatrix}1 \\ 0\end{bmatrix} + x_2 \begin{bmatrix}0 \\
1\end{bmatrix} + 2 \cdot \left( \frac{x_1 + x_2}{\sqrt{2}} \right)
\cdot \frac{1}{\sqrt{2}} \begin{bmatrix}1 \\ 1\end{bmatrix}= \begin{bmatrix}2x_1 + x_2 \\
x_1 + 2x_2\end{bmatrix}.
\end{eqnarray*}
So,
\begin{eqnarray*}
S =\begin{bmatrix} 2 & 1 \\ 1 & 2 \end{bmatrix}, \,\ \mbox{and} \quad S^{-1} = \frac{1}{3} \begin{bmatrix} 2 & -1 \\
-1 & 2
\end{bmatrix}.
\end{eqnarray*}

Also, $\tilde{f}_i = S^{-1}(\lambda_i f_i)$, and then
\begin{eqnarray*}
 \tilde{f}_1 =
\frac{1}{3}
\begin{bmatrix}2 \\ -1\end{bmatrix}, \,\ \tilde{f}_2 =
\frac{1}{3} \begin{bmatrix}-1 \\ 2\end{bmatrix}, \,\  \tilde{f}_3
= \frac{2}{3} S^{-1} \left( \frac{1}{\sqrt{2}}
\begin{bmatrix}1 \\ 1\end{bmatrix} \right) = \frac{2}{3\sqrt{2}}
\begin{bmatrix}1 \\ 1\end{bmatrix}.
\end{eqnarray*}
For the analyze and reconstruct (coarse only) compute coarse
coefficients, we have $\langle x, f_1 \rangle = 6.54, \langle x,
f_2 \rangle = 4.54$ and ignore fine coefficient $\langle x, f_3
\rangle$. Then
\begin{eqnarray*}
 x^{(1)} &=& \langle x, f_1 \rangle \tilde{f}_1 +
\langle x, f_2 \rangle \tilde{f}_2 \\
&=& 6.54 \cdot \frac{1}{3}
\begin{bmatrix}2
\\ -1\end{bmatrix} + 4.54 \cdot \frac{1}{3} \begin{bmatrix}-1 \\
2\end{bmatrix} \approx \begin{bmatrix}2.85 \\
0.85\end{bmatrix}.
\end{eqnarray*}
But the original signal is $x_{\text{orig.}} =
\begin{bmatrix}3 \\ 1\end{bmatrix}$, and noisy signal is $x =
\begin{bmatrix}6.54 \\ 4.54\end{bmatrix}$. Denoised approximation
using only coarse levels, we have $x^{(1)} \approx
\begin{bmatrix}2.85 \\ 0.85\end{bmatrix}$. This denoised version is
close to the original, having filtered out the high-frequency
noise in $f_3$.
\end{exam}

\section{\textbf{The results}}
Throughout the section, let $\{f_i\}_{i\in I}\subset\mathcal H$ be
an $AR$-frame with frame bounds $A, B$, positive weights
$\{\lambda_i\}_{i\in I}$, partition $I=\bigcup_{k=1}^\infty I_k$,
and with orthogonal decomposition
\begin{eqnarray*}
\mathcal H=\bigoplus_{k=1}^{\infty}\mathcal H_k, \qquad \mathcal
H_k=\overline{\operatorname{span}}\{f_i:i\in I_k\},
\end{eqnarray*}
where each $\{f_i\}_{i\in I_k}$ is a frame for $\mathcal H_k$.
Also, let
\begin{eqnarray*}
S_k:\mathcal H_k\rightarrow\mathcal H_k, \,\ S_kx:=\sum_{i\in
I_k}\lambda_i\langle x,f_i\rangle f_i,
\end{eqnarray*}
for any $x\in\mathcal H_k$, be the level-wise frame operators, and
$S=\sum_{k=1}^\infty S_k$ be the full frame operator.\\

Our first result establishes a block operator representation for
adaptive resolution frames and provides a convergent iterative
reconstruction method with an explicit error estimate.

\begin{prop} \label{Prop.1}
Suppose that $T\in\mathcal B(\mathcal H)$ satisfies $T(\mathcal
H_k)\subseteq\mathcal H_{k+1}, \,\ k\ge1$, and
$\rho:=\|S^{-1}T\|<1$. Then the following statements hold:
\begin{itemize}
\item[(i)] There exists an orthonormal basis of $\mathcal H$ in
which every $S_k$ is diagonal, the frame operator $S$ is block
diagonal, and $T$ is strictly lower triangular with respect to the
decomposition $\mathcal H=\bigoplus_{k=1}^{\infty}\mathcal H_k$.

\item[(ii)] For every $x\in\mathcal H$, define the sequence
$\{u_n\}_{n\ge0}$ by $u_0=0$, and
\begin{eqnarray} \label{eqn-90}
u_{n+1} = u_n + S^{-1}\bigl(x-(S-T)u_n\bigr), \,\ n\ge0.
\end{eqnarray}
Then $\{u_n\}_{n\geq 0}$ converges in norm to the unique solution
$u=(S-T)^{-1}x$. Moreover, $\|u-u_n\| \le \rho^{\,n}\|u\|, \,\
n\ge0$.
\end{itemize}
\end{prop}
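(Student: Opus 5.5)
The plan is to handle (ii) by a direct error-recursion argument, and (i) by assembling orthonormal bases level by level using the standing orthogonal decomposition $\mathcal H=\bigoplus_{k}\mathcal H_k$.

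For (i), each $\{f_i\}_{i\in I_k}$ is a frame for $\mathcal H_k$ with positive weights. Hence $S_k$ is a bounded, self-adjoint, positive and invertible operator on $\mathcal H_k$. The first step is to diagonalize each $S_k$ inside $\mathcal H_k$ by the spectral theorem. If $\dim\mathcal H_k<\infty$, this gives an orthonormal eigenbasis $\{e^{(k)}_j\}_j$ of $\mathcal H_k$. The next step is to take the union $\bigcup_k\{e^{(k)}_j\}_j$. Since the $\mathcal H_k$ are mutually orthogonal and their direct sum is $\mathcal H$, this union is an orthonormal basis of $\mathcal H$. I then extend each $S_k$ by zero on $\mathcal H_k^\perp$. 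Because $S=\sum_k S_k$ strongly, $S$ maps each $\mathcal H_k$ into itself and equals $S_k$ there, so $S$ is block diagonal and diagonal in this basis. For $T$, let $P_j$ denote the orthogonal projection onto $\mathcal H_j$ and consider the block entries $T_{jk}:=P_jT|_{\mathcal H_k}$. The hypothesis $T(\mathcal H_k)\subseteq\mathcal H_{k+1}$ gives $T_{jk}=0$ unless $j=k+1$. So the only nonzero blocks lie on the first subdiagonal, and $T$ is strictly lower triangular.

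The main obstacle is the word ``diagonal'' in (i) when some $\mathcal H_k$ is infinite-dimensional. There $S_k$ need not have pure point spectrum, and the spectral theorem only yields a unitary equivalence with a multiplication operator. My first attempt would be to state the eigenbasis argument under the tacit assumption that each level is finite-dimensional (or that each $S_k$ is diagonalizable, e.g.\ $S_k$ a compact perturbation of a multiple of the identity). Otherwise I would read ``diagonal'' in the multiplication-operator sense. The block-diagonal structure of $S$ and the strictly lower-triangular structure of $T$ hold in any case, since they only use the decomposition $\mathcal H=\bigoplus_k\mathcal H_k$.

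For (ii), write $S-T=S(I-S^{-1}T)$. Since $\rho=\|S^{-1}T\|<1$, the Neumann series shows $I-S^{-1}T$ is invertible, with $(I-S^{-1}T)^{-1}=\sum_{n\ge0}(S^{-1}T)^n$. Together with the invertibility of $S$, this makes $S-T$ invertible, so $u=(S-T)^{-1}x$ exists and is unique. Next set $e_n:=u-u_n$. Substituting $x=(S-T)u$ into \eqref{eqn-90} gives
\begin{eqnarray*}
e_{n+1}=e_n-S^{-1}(S-T)e_n=S^{-1}Te_n .
\end{eqnarray*}
Thus $e_n=(S^{-1}T)^ne_0=(S^{-1}T)^nu$, because $u_0=0$. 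Finally,
\begin{eqnarray*}
\|u-u_n\|\le\|S^{-1}T\|^n\|u\|=\rho^n\|u\|\longrightarrow0,
\end{eqnarray*}
which gives both the norm convergence and the stated error bound.
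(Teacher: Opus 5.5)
Your proposal is correct and follows the paper's proof essentially step for step. Part (i) uses block-diagonality of $S$ from the mutual orthogonality of the $\mathcal H_k$, per-level eigenbases glued into an orthonormal basis of $\mathcal H$, and the subdiagonal block form of $T$. Part (ii) uses $S-T=S(I-S^{-1}T)$, the Neumann series, and the error recursion $e_{n+1}=S^{-1}Te_n$ with $e_0=u$.

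One small repair: invertibility of $S_k$ should come from restricting the $AR$-frame inequality to $x\in\mathcal H_k$, where the other levels contribute nothing, giving $A\,I_{\mathcal H_k}\le S_k\le B\,I_{\mathcal H_k}$. Your stated reason is not enough, because an unweighted frame with merely positive weights need not remain a frame.

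Your caveat about infinite-dimensional levels is well founded and exposes a genuine error in the paper, which asserts that the spectral theorem yields an orthonormal eigenbasis of each $S_k$. For instance, let some level $\mathcal H_k=L^2[0,1]$ carry $f_i=\sqrt{1+t}\,e_i$, with $\{e_i\}$ an orthonormal basis and unit weights. Then $S_k$ is multiplication by $1+t$, which has no eigenvectors. So the ``diagonal'' claim in (i) already fails for $T=0$, and it really needs an extra hypothesis such as $\dim\mathcal H_k<\infty$ or a complete set of eigenvectors, exactly as you suggest.
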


\begin{proof}
Since $\mathcal H=\bigoplus_{k=1}^{\infty}\mathcal H_k$ is an
orthogonal decomposition, every $\mathcal H_k$ is a closed
subspace of $\mathcal H$. For each $k$, the operator
\begin{eqnarray*}
S_kx=\sum_{i\in I_k}\lambda_i\langle x,f_i\rangle f_i
\end{eqnarray*}
maps $\mathcal H_k$ into itself. Moreover, if $x\in\mathcal H_j$
with $j\neq k$, then $\langle x,f_i\rangle=0, \,\ i\in I_k$,
because $\mathcal H_j\perp\mathcal H_k$. Hence $S_kx=0$.
Therefore, $S=\sum_{k=1}^{\infty}S_k=\bigoplus_{k=1}^{\infty}S_k$,
so the frame operator is block diagonal with respect to the
decomposition $\mathcal H=\bigoplus_{k=1}^{\infty}\mathcal H_k$.
Since every $S_k$ is bounded, positive, self-adjoint and
invertible on $\mathcal H_k$, the spectral theorem provides an
orthonormal basis $\{e_i^{(k)}\}_{i\geq 1}$ of $\mathcal H_k$
consisting of eigenvectors of $S_k$,
\begin{eqnarray*}
S_ke_i^{(k)} = \sigma_i^{(k)}e_i^{(k)}, \qquad \sigma_i^{(k)}>0.
\end{eqnarray*}
Taking the union of these bases over all $k$ gives an orthonormal
basis of $\mathcal H$. Relative to this basis every $S_k$ is
diagonal, and consequently $S$ is block diagonal. Furthermore,
$$T(\mathcal H_k)\subseteq\mathcal H_{k+1},$$ implies that, relative
to the decomposition $\mathcal H=\bigoplus_{k=1}^{\infty}\mathcal
H_k$, the operator $T$ has the block representation
\begin{eqnarray*}
T=
\begin{bmatrix} 0&0&0&\cdots\\ T_1&0&0&\cdots\\ 0&T_2&0&\cdots\\
\vdots&\vdots&\vdots&\ddots \end{bmatrix},
\end{eqnarray*}
where, $T_k:=T|_{\mathcal H_k}:\mathcal H_k \rightarrow \mathcal
H_{k+1}$. Thus $T$ is strictly lower triangular. This proves
$(i)$. To prove $(ii)$, define $A:=S-T$. Since
$\|S^{-1}T\|=\rho<1$, the Neumann series theorem implies that
$I-S^{-1}T$ is invertible. Because $A = S(I-S^{-1}T)$, and $S$ is
invertible as the frame operator of an $AR$-frame, it follows that
$A=S-T$ is invertible. The iteration (\ref{eqn-90}) can be
rewritten as
\begin{eqnarray} \label{eqn-1000} \notag
u_{n+1} &= u_n+S^{-1}(x-Au_n)\\ &= S^{-1}x+S^{-1}Tu_n.
\end{eqnarray}

Let $u=A^{-1}x=(S-T)^{-1}x$. Then $Au=Su-Tu=x$, or
\begin{eqnarray} \label{eqn-1001}
u=S^{-1}x+S^{-1}Tu.
\end{eqnarray}
Define the error $e_n=u-u_n$. Then two identities (\ref{eqn-1000})
and (\ref{eqn-1001}) gives
\begin{eqnarray*}
\begin{aligned} e_{n+1} &=
S^{-1}Tu-S^{-1}Tu_n\\ &= S^{-1}T(u-u_n)\\ &= S^{-1}Te_n.
\end{aligned}
\end{eqnarray*}
Hence, $e_n=(S^{-1}T)^ne_0$. Therefore,
\begin{eqnarray*}
\|e_n\| \le \|S^{-1}T\|^n\|e_0\| = \rho^n\|u\|,
\end{eqnarray*}
because $e_0=u-u_0=u$. Since $0<\rho<1$, we have
$\rho^n\longrightarrow0$, and consequently
\begin{eqnarray*}
u_n\longrightarrow u=(S-T)^{-1}x,
\end{eqnarray*}
in norm. This completes the proof.
\end{proof}
\begin{rem}
This proposition uses $T$ in an essential way. It is an
application of the Richardson (or Picard) iteration
\cite{Richard,Yousef} to the block operator equation $(S-T)u=x$.
The hypothesis $\rho=\|S^{-1}T\|<1$ guarantees convergence by the
Neumann series theorem, while the block structure induced by the
$AR$-frame gives the adaptive multilevel interpretation of the
iteration.
\end{rem}

Our second result establishes the connection between adaptive
multiresolution frame decompositions and level-wise
preconditioning, and identifies the optimal block-diagonal scaling
that minimizes the worst-case convergence factor of the iterative
reconstruction.

\begin{prop} \label{Prop.22}
Let $0<A_k I_{\mathcal H_k} \leq S_k \leq B_k I_{\mathcal H_k},
\,\ k\geq 1$, where $0<A_k\leq B_k<\infty $. Also, let
\begin{eqnarray} \label{eqn-91}
P_{\alpha} = \bigoplus_{k=1}^{\infty}\alpha_k S_k^{-1},
\end{eqnarray}
where $\alpha_k>0$ are scaling parameters satisfy
$\sup_{k\ge1}\frac{\alpha_k}{A_k}<\infty$. Then the following
statements hold:
\begin{itemize}
\item[(i)] $P_{\alpha}\in\mathcal B(\mathcal H)$, and $P_\alpha S
= \bigoplus_{k=1}^{\infty}\alpha_k I_{\mathcal H_k}$. If, in
addition $$0<\alpha_- \leq \alpha_k\leq \alpha_+<\infty ,$$ then
$\alpha_- I \leq P_\alpha S \leq \alpha_+ I$. \item[(ii)] Consider
the iterative reconstruction
\begin{eqnarray} \label{eqn-92}
x_{n+1} = x_n+ P_\alpha(y-Sx_n),
\end{eqnarray}
where $y=Sx$. Then the error operator is $E_\alpha = I-P_\alpha
S$, and the error satisfies $e_{n+1} = E_\alpha e_n$, where
$e_n=x-x_n$. Consequently,
$$\|x-x_n\| \leq \|E_\alpha\|^n\|x-x_0\|.$$
Among all block-diagonal preconditioners of the form
(\ref{eqn-91}), the optimal scalar normalization on each level is
\begin{eqnarray*}
\alpha_k^{*} = \frac{2} {\lambda_{\max}(S_k)+\lambda_{\min}(S_k)}=
\frac{2}{A_k+B_k},
\end{eqnarray*}
for which
\begin{eqnarray*}
\|I-\alpha_k^{*}S_k\| = \frac{B_k-A_k}{B_k+A_k},
\end{eqnarray*}
Consequently,
\begin{eqnarray*}
\|E_{\alpha^*}\| = \sup_{k\geq1} \frac{B_k-A_k}{B_k+A_k}.
\end{eqnarray*}
and this is the smallest possible operator norm among all
block-diagonal preconditioners of the above form.
\end{itemize}
\end{prop}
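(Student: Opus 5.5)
The plan is to work block by block with respect to $\mathcal H=\bigoplus_{k}\mathcal H_k$. Proposition \ref{Prop.1}(i) shows that $S=\bigoplus_k S_k$ is block diagonal. For an operator of the form $\bigoplus_k D_k$ we have $\|\bigoplus_k D_k\|=\sup_k\|D_k\|$, so every global statement reduces to a uniform level-wise statement.

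\textbf{Part (i).} From $A_kI_{\mathcal H_k}\le S_k$ we get $\|S_k^{-1}\|\le 1/A_k$. Hence $\|\alpha_kS_k^{-1}\|\le \alpha_k/A_k$, and the hypothesis $\sup_k\alpha_k/A_k<\infty$ gives $P_\alpha\in\mathcal B(\mathcal H)$. To check this on a finitely supported $x=\sum_k x_k$ with $x_k\in\mathcal H_k$, I would use orthogonality: $\|P_\alpha x\|^2=\sum_k\alpha_k^2\|S_k^{-1}x_k\|^2\le(\sup_k\alpha_k/A_k)^2\|x\|^2$, and then extend by density. Since both operators preserve each $\mathcal H_k$, we have $P_\alpha S=\bigoplus_k\alpha_kS_k^{-1}S_k=\bigoplus_k\alpha_kI_{\mathcal H_k}$. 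For $x=\sum_k x_k$, $\langle P_\alpha Sx,x\rangle=\sum_k\alpha_k\|x_k\|^2$, which lies between $\alpha_-\|x\|^2$ and $\alpha_+\|x\|^2$.

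\textbf{Part (ii), error recursion.} This is the same computation as in Proposition \ref{Prop.1}(ii). Using $y=Sx$, we get $e_{n+1}=x-x_n-P_\alpha S(x-x_n)=(I-P_\alpha S)e_n$. Iterating and taking norms gives $\|x-x_n\|\le\|E_\alpha\|^n\|x-x_0\|$.

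\textbf{Optimality, the delicate step.} With $P_\alpha$ taken literally as in (\ref{eqn-91}), part (i) gives $E_\alpha=\bigoplus_k(1-\alpha_k)I_{\mathcal H_k}$. Its norm is $\sup_k|1-\alpha_k|$, which is minimized (and equals $0$) at $\alpha_k=1$. So the stated quantity $\|I-\alpha_k^*S_k\|$ cannot come from (\ref{eqn-91}) verbatim. I would therefore make the intended reading explicit: the optimality assertion concerns the scalar level-wise scaling of $S_k$ itself, that is, the preconditioner $\bigoplus_k\alpha_kI_{\mathcal H_k}$, whose error operator is $\bigoplus_k(I-\alpha_kS_k)$. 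I would also require $A_k,B_k$ to be the optimal bounds, $A_k=\min\sigma(S_k)$ and $B_k=\max\sigma(S_k)$, which matches the notation $\lambda_{\min},\lambda_{\max}$.

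Under this reading, the spectral theorem gives $\|I-\alpha S_k\|=\sup_{t\in\sigma(S_k)}|1-\alpha t|=\max\{|1-\alpha A_k|,|1-\alpha B_k|\}$. The maximum uses convexity of $t\mapsto|1-\alpha t|$ and the fact that $A_k,B_k\in\sigma(S_k)$. This is a maximum of a decreasing and an increasing piecewise-linear function of $\alpha$. It is minimized where $1-\alpha A_k=\alpha B_k-1$, i.e.\ at $\alpha_k^*=2/(A_k+B_k)$, with value $(B_k-A_k)/(B_k+A_k)$. Since $\alpha_k^*\le 1/A_k$, the sup condition holds automatically in this reading.

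The block-norm identity then gives $\|E_{\alpha^*}\|=\sup_k(B_k-A_k)/(B_k+A_k)$. For minimality, take any admissible $\alpha$. Each block satisfies $\|I-\alpha_kS_k\|\ge(B_k-A_k)/(B_k+A_k)$ by the scalar minimax computation, so $\|E_\alpha\|=\sup_k\|I-\alpha_kS_k\|\ge\|E_{\alpha^*}\|$.

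The main obstacle is precisely this reconciliation of the preconditioner's form with the optimality claim, together with the need for sharp bounds $A_k,B_k$. Without sharpness the lower bound in the minimax step fails, and I would state both points explicitly before proving the optimality part.
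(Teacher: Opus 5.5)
Your proposal is correct. For part (i), the error recursion, and the level-wise minimax it follows the paper's argument. The paper applies the bound $\|P_\alpha x\|^2=\sum_k\alpha_k^2\|S_k^{-1}x_k\|^2$ directly to every $x=\sum_kx_k$ rather than passing through finitely supported vectors, which makes no difference.

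The one place you depart from the paper is the one you flag, and there you are right and the paper is not. Its proof asserts that the restriction of the error operator to $\mathcal H_k$ is $I-\alpha_kS_k$, and that $I-P_{\alpha^*}S=\bigoplus_k(I-\alpha_k^*S_k)$. Both contradict its own part (i): $P_\alpha S=\bigoplus_k\alpha_kI_{\mathcal H_k}$ forces $E_\alpha=\bigoplus_k(1-\alpha_k)I_{\mathcal H_k}$. Read literally, the optimality claim is therefore false. With sharp bounds, $\alpha_k\equiv 1$ is admissible and gives $E_\alpha=0$, and in general $\|E_{\alpha^*}\|=\sup_k|1-\alpha_k^*|$. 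What the paper's computation actually proves is your corrected statement for the scalar level-wise preconditioner $\bigoplus_k\alpha_kI_{\mathcal H_k}$.

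Similarly, the paper writes $\|I-\alpha_kS_k\|=\max_{\lambda\in[A_k,B_k]}|1-\alpha_k\lambda|$ as an equality, which requires $A_k,B_k\in\sigma(S_k)$. It only hints at this through the notation $\lambda_{\min}(S_k),\lambda_{\max}(S_k)$, so making sharpness an explicit hypothesis is the right call.

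One small repair: $\alpha_k^*\le 1/A_k$ alone does not give $\sup_k\alpha_k^*<\infty$, which is what boundedness of $\bigoplus_k\alpha_kI_{\mathcal H_k}$ needs, let alone $\sup_k\alpha_k^*/A_k<\infty$. You also need $\inf_kA_k>0$. Under your sharpness assumption this holds. Since $S=\bigoplus_kS_k\ge AI$, with $A$ the lower $AR$-frame bound, you get $A_k=\min\sigma(S_k)\ge A$. Hence $\alpha_k^*\le 1/A$ and $\alpha_k^*/A_k\le 1/A^2$.
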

\begin{proof}
Since $\mathcal H=\bigoplus_{k=1}^{\infty}\mathcal H_k$, every
vector $x\in\mathcal H$ admits the unique orthogonal decomposition
$x=\sum_{k=1}^{\infty}x_k, \,\ x_k\in\mathcal H_k$, with $\|x\|^2
= \sum_{k=1}^{\infty}\|x_k\|^2$. Moreover, $$S=
\bigoplus_{k=1}^{\infty}S_k.$$ Since $A_kI_{\mathcal H_k} \le
S_k$, each $S_k$ is positive and invertible on $\mathcal H_k$.
Furthermore, $\|S_k^{-1}\| = \frac1{\lambda_{\min}(S_k)} \le
\frac1{A_k}$. Hence, for every $x=\sum_kx_k, \,\ P_\alpha x =
\sum_{k=1}^{\infty} \alpha_kS_k^{-1}x_k$, and therefore
\begin{eqnarray*}
\|P_\alpha x\|^2 &=& \sum_{k=1}^{\infty} \alpha_k^2
\|S_k^{-1}x_k\|^2 \\ &\le& \sum_{k=1}^{\infty}
\frac{\alpha_k^2}{A_k^2} \|x_k\|^2 \\ &\le& \left( \sup_{k\ge1}
\frac{\alpha_k}{A_k} \right)^2 \sum_{k=1}^{\infty}\|x_k\|^2.
\end{eqnarray*}
Thus $\|P_\alpha x\| \le \left( \sup_{k\ge1} \frac{\alpha_k}{A_k}
\right)\|x\|$, showing that $P_\alpha\in\mathcal B(\mathcal H)$.
Next,
\begin{eqnarray} \label{eqn-93}
P_\alpha S = \left( \bigoplus_{k=1}^{\infty} \alpha_kS_k^{-1}
\right) \left( \bigoplus_{k=1}^{\infty}S_k \right).
\end{eqnarray}
Since products of block-diagonal operators are computed blockwise,
(\ref{eqn-93}) gives
\begin{eqnarray*}
P_\alpha S = \bigoplus_{k=1}^{\infty}
\alpha_kS_k^{-1}S_k=\bigoplus_{k=1}^{\infty} \alpha_kI_{\mathcal
H_k}.
\end{eqnarray*}
If $0<\alpha_-\le\alpha_k\le\alpha_+$, then for every
$x=\sum_kx_k$,

\begin{eqnarray*}
\langle P_\alpha Sx,x\rangle &= \sum_{k=1}^{\infty}
\alpha_k\|x_k\|^2.
\end{eqnarray*}
Therefore, $\alpha_-\|x\|^2 \le \langle P_\alpha Sx,x\rangle \le
\alpha_+\|x\|^2$, which is equivalent to $$\alpha_-I \le P_\alpha
S \le \alpha_+I.$$ This proves $(i)$.\\
For part $(ii)$, let $e_n=x-x_n $. The iteration (\ref{eqn-92})
gives
\begin{eqnarray*}
e_{n+1} &=& x-x_n-P_\alpha(y-Sx_n)\\ &=& x-x_n-P_\alpha(Sx-Sx_n)\\
&=& (I-P_\alpha S)e_n .
\end{eqnarray*}
Hence, $e_n=(I-P_\alpha S)^ne_0$, and therefore, $\|e_n\| \leq
\|I-P_\alpha S\|^n \|e_0\|$. To determine the optimal
normalization, consider a fixed level $k$. Since $S_k$ is
self-adjoint and positive, the spectrum of $S_k$ satisfies
\begin{eqnarray*}
\sigma(S_k) \subset[A_k,B_k].
\end{eqnarray*}
The restriction of the error operator to $\mathcal H_k$ is
$I-\alpha_kS_k$ whose operator norm satisfies
\begin{eqnarray*}
\|I-\alpha_kS_k\| = \max_{\lambda\in[A_k,B_k]}
|1-\alpha_k\lambda|.
\end{eqnarray*}
The optimal value of $\alpha_k$ minimizes this maximum. Indeed,
since the function $$\lambda \longmapsto |1-\alpha_k\lambda|,$$ is
convex on $[A_k,B_k]$, a basic property of convex functions is
that their maximum over a compact interval is attained at an
endpoint, so its maximum over the interval is attained at one of
the endpoints. Hence
\begin{eqnarray*}
\max_{\lambda\in[A_k,B_k]}|1-\alpha_k\lambda| =
\max\{|1-\alpha_kA_k|,\;|1-\alpha_kB_k|\}.
\end{eqnarray*}
The minimax value is achieved when these two endpoint errors are
equal; otherwise, one can perturb $\alpha_k$ to decrease the
larger error without increasing the smaller one beyond it. Thus
$|1-\alpha_kA_k| = |1-\alpha_kB_k|$, which, since
$1/B_k<\alpha_k<1/A_k$, reduces to $$1-\alpha_kA_k =
\alpha_kB_k-1.$$ Indeed, since $A_k,B_k>0$, the optimal $\alpha_k$
lies between $\frac{1}{B_k}$ and $\frac{1}{A_k}$. Indeed, if
$\alpha_k<1/B_k$, then both $1-\alpha_k A_k$ and $1-\alpha_k B_k$
are positive, and increasing $\alpha_k$ decreases both, so this
cannot be optimal; if $\alpha_k>1/A_k$, then both are negative,
and decreasing $\alpha_k$ decreases the maximum. Thus the
minimizer satisfies $\frac1{B_k} < \alpha_k < \frac1{A_k}$, so
that $1-\alpha_k A_k>0, \,\ 1-\alpha_k B_k<0$. Hence

$$|1-\alpha_k A_k| = 1-\alpha_k A_k, \,\ \mbox{and} \,\ |1-\alpha_k B_k| =
\alpha_k B_k-1.$$ Therefore the equality of endpoint errors
becomes $1-\alpha_k A_k = \alpha_k B_k-1$. Solving for $\alpha_k$
gives
\begin{eqnarray*}
\alpha_k^* = \frac{2}{A_k+B_k}.
\end{eqnarray*}
 Substituting this value, gives
\begin{eqnarray*}
\|I-\alpha_k^*S_k\| &=& 1-\alpha_k^*A_k\\ &=&
1-\frac{2A_k}{A_k+B_k}\\ &=& \frac{B_k-A_k}{B_k+A_k}.
\end{eqnarray*}
Finally,
\begin{eqnarray*}
I-P_{\alpha^*}S = \bigoplus_{k=1}^{\infty} (I-\alpha_k^*S_k),
\end{eqnarray*}
Since the norm of a block-diagonal operator acting on an
orthogonal direct sum equals the supremum of the norms of its
diagonal blocks,
 $\|I-P_{\alpha^*}S\| = \sup_k \|I-\alpha_k^*S_k\|$.
Hence,
\begin{eqnarray*}
\|I-P_{\alpha^*}S\| = \sup_k \|I-\alpha_k^*S_k\| = \sup_k
\frac{B_k-A_k}{B_k+A_k}.
\end{eqnarray*}
Because the minimization problem is independent on each orthogonal
level, no other choice of positive scalars $\{\alpha_k\}$ can
produce a smaller block norm. Hence $P_{\alpha^*}$ is the optimal
block-diagonal preconditioner within the class
\begin{eqnarray*}
\left\{ \bigoplus_{k=1}^{\infty}\alpha_kS_k^{-1}: \alpha_k>0,\,
\sup_k\frac{\alpha_k}{A_k}<\infty \right\},
\end{eqnarray*}
and the proof is complete.

\end{proof}

\begin{rem}
The proposition shows that an $AR$-frame naturally admits an
optimal level-wise preconditioner of the form
\begin{eqnarray*}
P_{\alpha^*} = \bigoplus_{k=1}^{\infty} \frac{2}{A_k+B_k}S_k^{-1}
\end{eqnarray*}
which adapts the reconstruction to the local conditioning of each
resolution level and minimize the worse-case contraction factor
among all such block-diagonal preconditioners.
\end{rem}

We now examine the stability of $AR$-frames, showing that small
perturbations of the frame vectors preserve the $AR$-frame
property and provide explicit bounds on the perturbed frame.
\begin{prop} \label{Prop.2}
Suppose that $\{g_i\}_{i\in I}\subset\mathcal H$
satisfies
\begin{eqnarray*}
\sum_{k=1}^{\infty}\sum_{i\in I_k} \lambda_i\|f_i-g_i\|^2 \le
\delta^2,
\end{eqnarray*}
where $0<\delta<\sqrt A$. Then $\{g_i\}_{i\in I}$, equipped with
the same weights $\{\lambda_i\}_{i\in I}$ and the same partition
$I=\bigcup_{k=1}^{\infty}I_k$, is also an $AR$-frame for $\mathcal
H$. Moreover, its frame bounds are $(\sqrt A-\delta)^2
\quad\text{and}\quad (\sqrt B+\delta)^2$.
\end{prop}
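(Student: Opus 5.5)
The plan is the standard Paley--Wiener/Christensen-type perturbation argument, carried out in the weighted sequence space $\ell^2(I)$. For $x\in\mathcal H$ I will write
\[
U(x)=\Bigl(\sum_{k=1}^\infty\sum_{i\in I_k}\lambda_i|\langle x,f_i\rangle|^2\Bigr)^{1/2},\qquad
V(x)=\Bigl(\sum_{k=1}^\infty\sum_{i\in I_k}\lambda_i|\langle x,g_i\rangle|^2\Bigr)^{1/2}.
\]
Equivalently, $U(x)$ is the $\ell^2(I)$-norm of the weighted coefficient sequence $\{\sqrt{\lambda_i}\langle x,f_i\rangle\}_{i\in I}$, and $V(x)$ is the same for $\{\sqrt{\lambda_i}\langle x,g_i\rangle\}_{i\in I}$. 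The double sum over the partition is just a rearrangement of a sum of nonnegative terms over $I$, so the partition plays no role beyond bookkeeping. Since $g_i$ carries the same weights and the same partition as $f_i$, the task is to show $(\sqrt A-\delta)\|x\|\le V(x)\le(\sqrt B+\delta)\|x\|$ and then square.

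First, bound the difference of the coefficient sequences pointwise in $x$. By Cauchy--Schwarz,
\[
\sum_{k}\sum_{i\in I_k}\lambda_i|\langle x,f_i-g_i\rangle|^2\le\sum_k\sum_{i\in I_k}\lambda_i\|f_i-g_i\|^2\,\|x\|^2\le\delta^2\|x\|^2,
\]
so the $\ell^2$-distance between the two weighted coefficient sequences is at most $\delta\|x\|$. Second, apply the triangle inequality in $\ell^2(I)$ (Minkowski). This gives $V(x)\le U(x)+\delta\|x\|\le(\sqrt B+\delta)\|x\|$ and $V(x)\ge U(x)-\delta\|x\|\ge(\sqrt A-\delta)\|x\|$, using the upper and lower bounds in (\ref{form1}) for the original $AR$-frame.

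Finally, the hypothesis $\delta<\sqrt A$ makes $\sqrt A-\delta>0$, so squaring preserves the inequalities. This yields the bounds $(\sqrt A-\delta)^2$ and $(\sqrt B+\delta)^2$, both positive and finite, which is exactly (\ref{form1}) for $\{g_i\}$.

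There is no real obstacle. The only care needed is to justify that the coefficient sequences lie in $\ell^2(I)$ before using Minkowski. For $\{f_i\}$ this follows from the upper bound in (\ref{form1}), and for the difference sequence it follows from the Cauchy--Schwarz estimate above; hence the $\{g_i\}$ sequence, being their difference, is in $\ell^2(I)$ as well. I would also note that the statement does not require $\{g_i\}$ to respect the orthogonal level decomposition $\mathcal H_k$, only the frame inequality, so that standing assumption is not needed here.
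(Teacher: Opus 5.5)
Your proposal is correct and follows essentially the same route as the paper. The paper packages your $U(x)$ and $V(x)$ as the norms of the weighted analysis operators $T_f, T_g:\mathcal H\to\ell^2(I)$, obtains $\|T_f-T_g\|\le\delta$ by Cauchy--Schwarz, and then applies the triangle and reverse triangle inequalities, squaring using $\delta<\sqrt A$.
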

\begin{proof}
Define the weighted analysis operators
\begin{eqnarray*}
T_f,T_g:\mathcal H\longrightarrow \ell^2(I)
\end{eqnarray*}
by
\begin{eqnarray*}
T_fx = \{\sqrt{\lambda_i}\langle x,f_i\rangle\}_{i\in I}, \qquad
T_gx = \{\sqrt{\lambda_i}\langle x,g_i\rangle\}_{i\in I}.
\end{eqnarray*}
Since $\{f_i\}_{i\in I}$ is an $AR$-frame, we have
\begin{eqnarray*}
A\|x\|^2 \le \sum_{k=1}^{\infty}\sum_{i\in I_k} \lambda_i |\langle
x,f_i\rangle|^2 \le B\|x\|^2, \qquad x\in\mathcal H.
\end{eqnarray*}
Equivalently,
\begin{eqnarray*}
\sqrt A\,\|x\| \le \|T_fx\| \le \sqrt B\,\|x\|, \qquad
x\in\mathcal H.
\end{eqnarray*}
Next, for every $x\in\mathcal H$, we have
\begin{eqnarray*}
\|(T_f-T_g)x\|^2 &= \sum_{k=1}^{\infty}\sum_{i\in I_k} \lambda_i
\big| \langle x,f_i-g_i\rangle \big|^2.
\end{eqnarray*}
By the Cauchy-Schwarz inequality, we have
\begin{eqnarray*}
\|(T_f-T_g)x\|^2 &\le& \|x\|^2 \sum_{k=1}^{\infty}\sum_{i\in I_k}
\lambda_i \|f_i-g_i\|^2\\ &\le& \delta^2\|x\|^2.
\end{eqnarray*}
Hence, $\|T_f-T_g\| \le \delta$. Using the reverse triangle
inequality, we have
\begin{eqnarray*}
\|T_gx\| &\ge& \|T_fx\| - \|(T_f-T_g)x\|\\ &\ge& (\sqrt
A-\delta)\|x\|.
\end{eqnarray*}
Since $\delta<\sqrt A$, the quantity $\sqrt A-\delta$ is positive.
Therefore,

\begin{eqnarray} \label{eqn-95} \notag
\sum_{k=1}^{\infty}\sum_{i\in I_k} \lambda_i |\langle
x,g_i\rangle|^2 &=& \|T_gx\|^2\\ &\ge& (\sqrt A-\delta)^2\|x\|^2.
\end{eqnarray}
Similarly,

\begin{eqnarray*}
\|T_gx\| &\le& \|T_fx\| + \|(T_f-T_g)x\|\\ &\le& (\sqrt
B+\delta)\|x\|.
\end{eqnarray*}

Hence
\begin{eqnarray} \label{eqn-96} \notag
\sum_{k=1}^{\infty}\sum_{i\in I_k} \lambda_i |\langle
x,g_i\rangle|^2 &=& \|T_gx\|^2\\ &\le& (\sqrt B+\delta)^2\|x\|^2.
\end{eqnarray}

Combining the estimates (\ref{eqn-95}) and (\ref{eqn-96}) yields
\begin{eqnarray*}
(\sqrt A-\delta)^2\|x\|^2 \le \sum_{k=1}^{\infty}\sum_{i\in I_k}
\lambda_i |\langle x,g_i\rangle|^2 \le (\sqrt B+\delta)^2\|x\|^2,
\qquad x\in\mathcal H.
\end{eqnarray*}
Therefore, $\{g_i\}_{i\in I}$, equipped with the same weights and
the same partition, is an $AR$-frame for $\mathcal H$, with frame
bounds $(\sqrt A-\delta)^2 \quad\text{and}\quad (\sqrt
B+\delta)^2$. This completes the proof.

\end{proof}

The following result provides a convergent Neumann series
representation for the perturbed reconstruction operators,
yielding practical formulas for applications.

\begin{prop} \label{Prop.3}
Let the weighted analysis operator $T_1: \mathcal{H} \to
\ell^2(I)$ be defined by
\begin{eqnarray*}
T_1(x) = (\sqrt{\lambda_i} \langle x, f_i \rangle)_{i \in I},
\end{eqnarray*}
and suppose $\{ g_i \}_{i \in I} \subset \mathcal{H}$ is another
sequence with the same weights and partition, with weighted
analysis operator
\begin{eqnarray*}
T_2(x) = (\sqrt{\lambda_i} \langle x, g_i \rangle)_{i \in I}.
\end{eqnarray*}
Define the perturbation operator $E := T_1 - T_2$.
\begin{itemize}
\item[(i)] If $\|E\| < \sqrt{A}$, then $\{ g_i \}_{i \in I}$ is
also an $AR$-frame with the same weights and partition, and its
frame bounds are $(\sqrt{A} - \|E\|)^2$ and $(\sqrt{B} +
\|E\|)^2$. \item[(ii)] If, in addition, the perturbation satisfies
$\|E\| < \sqrt{B+A}-\sqrt{B}$, then the canonical reconstruction
operator $R_g = S_g^{-1} T_2^*$ can be expressed as a convergent
Neumann series perturbation of $R_f = S_f^{-1} T_1^*$, i.e.,
\begin{eqnarray*}
R_g = \sum_{n=0}^{\infty} (- S_f^{-1} \Delta S)^n \big(R_f -
S_f^{-1} E^* \big),
\end{eqnarray*}
where $S_f = T_1^* T_1$ and $S_g = T_2^* T_2$ are $AR$-frame
operators for $\{ f_i \}_{i \in I}$ and $\{ g_i \}_{i \in I}$,
respectively, and $\Delta S := S_g - S_f$.
\end{itemize}
\end{prop}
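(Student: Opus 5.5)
For part (i), I will repeat the argument of Proposition \ref{Prop.2} with $\delta$ replaced by $\|E\|$. The frame inequality (\ref{form1}) says $\sqrt A\,\|x\|\le\|T_1x\|\le\sqrt B\,\|x\|$ for all $x\in\mathcal H$. Since $T_2=T_1-E$, the triangle and reverse triangle inequalities give
\begin{eqnarray*}
(\sqrt A-\|E\|)\|x\|\le\|T_2x\|\le(\sqrt B+\|E\|)\|x\|.
\end{eqnarray*}
Because $\|E\|<\sqrt A$, the lower constant is positive. Squaring yields the $AR$-frame inequality for $\{g_i\}$ with the same weights and partition, since $\|T_2x\|^2=\sum_k\sum_{i\in I_k}\lambda_i|\langle x,g_i\rangle|^2$.

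For part (ii), I will first record the algebraic identities. With $S_g=S_f+\Delta S=S_f(I+S_f^{-1}\Delta S)$ and $T_2^*=T_1^*-E^*$, we have
\begin{eqnarray*}
S_f^{-1}T_2^*=R_f-S_f^{-1}E^*.
\end{eqnarray*}
Hence, whenever $\|S_f^{-1}\Delta S\|<1$, the Neumann series gives
\begin{eqnarray*}
S_g^{-1}=(I+S_f^{-1}\Delta S)^{-1}S_f^{-1}=\sum_{n\ge0}(-S_f^{-1}\Delta S)^nS_f^{-1},
\end{eqnarray*}
and applying this to $T_2^*$ gives exactly the claimed series for $R_g=S_g^{-1}T_2^*$.

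The main step is therefore the estimate $\|S_f^{-1}\Delta S\|<1$. I would bound $\|S_f^{-1}\|\le 1/A$. Then
\begin{eqnarray*}
\Delta S=T_2^*T_2-T_1^*T_1=-T_1^*E-E^*T_1+E^*E,
\end{eqnarray*}
so $\|\Delta S\|\le 2\sqrt B\,\|E\|+\|E\|^2=(\sqrt B+\|E\|)^2-B$. The hypothesis $\|E\|<\sqrt{B+A}-\sqrt B$ is equivalent to $(\sqrt B+\|E\|)^2<A+B$, which gives $\|\Delta S\|<A$. Therefore $\|S_f^{-1}\Delta S\|\le\|\Delta S\|/A<1$, and the series converges in operator norm.

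There are two points of care. First, the bound $\|S_f^{-1}\|\le1/A$ follows from $S_f\ge AI$. Second, the hypotheses of (ii) include those of (i): since $\sqrt{A+B}-\sqrt B<\sqrt A$, the condition of (ii) implies $\|E\|<\sqrt A$, so $S_g$ is invertible by (i), consistent with the series. The only delicate step is the bound on $\Delta S$, which is where the constant $\sqrt{B+A}-\sqrt B$ enters.
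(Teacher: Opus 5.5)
Your proposal is correct and follows essentially the same route as the paper. For (i) you use the triangle and reverse-triangle inequalities on $T_2=T_1-E$, and for (ii) the identity $\Delta S=-T_1^*E-E^*T_1+E^*E$ with $\|S_f^{-1}\Delta S\|\le (2\sqrt B\,\|E\|+\|E\|^2)/A<1$ exactly when $\|E\|<\sqrt{A+B}-\sqrt B$. The differences are cosmetic: you factor $S_g=S_f(I+S_f^{-1}\Delta S)$ directly instead of multiplying through to get $(I+S_f^{-1}\Delta S)R_g=R_f-S_f^{-1}E^*$, and your observation that $\sqrt{A+B}-\sqrt B<\sqrt A$ makes explicit that the hypothesis of (ii) already contains that of (i).
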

\begin{proof}
$(i)$ For any $x \in \mathcal{H}$, we have
\begin{eqnarray*}
\|T_2 x\| = \|T_1 x - E x\| \le \|T_1 x\| + \|E x\| \le \sqrt{B}
\|x\| + \|E\| \|x\| = (\sqrt{B} + \|E\|) \|x\|.
\end{eqnarray*}
Similarly, using the reverse triangle inequality, we have
\begin{eqnarray*}
\|T_2 x\| = \|T_1 x - E x\| \ge \|T_1 x\| - \|E x\| \ge \sqrt{A}
\|x\| - \|E\| \|x\| = (\sqrt{A} - \|E\|) \|x\|.
\end{eqnarray*}
Squaring both sides, we have
\begin{eqnarray*}
(\sqrt{A} - \|E\|)^2 \|x\|^2 \le \|T_2 x\|^2 = \sum_{k=1}^\infty
\sum_{i \in I_k} \lambda_i |\langle x, g_i \rangle|^2 \le
(\sqrt{B} + \|E\|)^2 \|x\|^2.
\end{eqnarray*}
Since $\|E\| < \sqrt{A}$, the lower bound is strictly positive.
Therefore, $\{ g_i \}$ is an $AR$-frame with the same weights
$\lambda_i$ and partition $\{ I_k \}$.\\
$(ii)$ For any $x \in \mathcal{H}$ we have
\begin{eqnarray*}
\langle S_g x, x \rangle = \|T_2 x\|^2 \ge (\sqrt{A} - \|E\|)^2
\|x\|^2 > 0, \,\ \forall x \neq 0.
\end{eqnarray*}
This shows $S_g$ is bounded, self-adjoint and strictly positive.
Thus, $S_g^{-1}$ exists and is bounded. The canonical
reconstruction operators are
\begin{eqnarray*}
R_f = S_f^{-1} T_1^*, \quad R_g = S_g^{-1} T_2^*.
\end{eqnarray*}
Now express $S_g$ as a perturbation of $S_f$ as follows:
\begin{eqnarray*}
S_g = S_f + \Delta S, \quad \Delta S := S_g - S_f = -T_1^* E - E^*
T_1 + E^* E.
\end{eqnarray*}
So
\begin{eqnarray*}
R_g = S_g^{-1} T_2^* = (S_f + \Delta S)^{-1} (T_1^* - E^*).
\end{eqnarray*}
Multiply both sides by $S_f + \Delta S$, we have
\begin{eqnarray*}
(S_f + \Delta S) R_g = T_1^* - E^* \quad \implies \quad S_f R_g =
T_1^* - E^* - \Delta S R_g.
\end{eqnarray*}
Hence,
\begin{eqnarray*}
R_g &=& S_f^{-1} T_1^* - S_f^{-1} E^* - S_f^{-1} \Delta S R_g\\
&=& R_f - S_f^{-1} E^* - S_f^{-1} \Delta S R_g,
\end{eqnarray*}
that is,
\begin{eqnarray*}
(I + S_f^{-1} \Delta S) R_g = R_f - S_f^{-1} E^*.
\end{eqnarray*}
Therefore,
\begin{eqnarray} \label{eqn-200}
 R_g = (I + S_f^{-1} \Delta S)^{-1} (R_f - S_f^{-1} E^*),
\end{eqnarray}
and the Neumann series should be applied to $(I + S_f^{-1} \Delta
S)^{-1}$. If $\|S_f^{-1}\Delta S\| < 1$, then $I + S_f^{-1} \Delta
S$ is invertible and its inverse can be written as a Neumann
series
\begin{eqnarray} \label{eqn-300}
(I + S_f^{-1} \Delta S)^{-1}=\sum_{n=0}^{\infty} (-S_f^{-1} \Delta
S)^{n}.
\end{eqnarray}
For this, we have that
\begin{eqnarray*}
\|S_f^{-1} \Delta S\| \le \|S_f^{-1} T_1^* E\| + \|S_f^{-1} E^*
T_1\| + \|S_f^{-1} E^* E\|.
\end{eqnarray*}
Using $\|S_f^{-1}\| \le \frac{1}{A}$ and $\|T_1\| = \|T_1^*\| \le
\sqrt{B}$, we have
\begin{eqnarray*}
\|S_f^{-1} \Delta S\| \le \frac{\sqrt{B}}{A} \|E\| +
\frac{\sqrt{B}}{A} \|E\| + \frac{1}{A} \|E\|^2 = \frac{2
\sqrt{B}}{A} \|E\| + \frac{1}{A} \|E\|^2.
\end{eqnarray*}
The Neumann series (\ref{eqn-300}) converges if $\|S_f^{-1} \Delta
S\| < 1$, which holds when $\|E\| < \sqrt{B + A} - \sqrt{B}$.
Thus,
\begin{eqnarray*}
(I + S_f^{-1} \Delta S)^{-1} = \sum_{n=0}^{\infty} (-S_f^{-1}
\Delta S)^n,
\end{eqnarray*}
and therefore, by (\ref{eqn-200}) we have
\begin{eqnarray*}
R_g = \sum_{n=0}^{\infty} (-S_f^{-1} \Delta S)^n (R_f - S_f^{-1}
E^*),
\end{eqnarray*}
converges in operator norm.
\end{proof}

\begin{exam}
Consider an AR-frame in $\mathbb{R}^2$ as in Example
(\ref{exe.1}). The canonical reconstruction operator is
\begin{eqnarray*}
R_f = S_f^{-1} T_1^*, \quad \text{where } T_1^* c = \sum_i
\sqrt{\lambda_i} c_i f_i.
\end{eqnarray*}
Let's perturb $f_1$ slightly,
\begin{eqnarray*}
g_1 = f_1 + \epsilon
\begin{bmatrix} 0 \\ 0.1 \end{bmatrix} = \begin{bmatrix} 1 \\
0.1\end{bmatrix}, \quad g_2 = f_2, \quad g_3 = f_3
\end{eqnarray*}
with the same weights and partition. The weighted analysis
operators are
\begin{eqnarray*}
T_1(x) =
\begin{bmatrix} \langle x,f_1\rangle \\ \langle x,f_2\rangle \\
\sqrt{2}\langle x,f_3\rangle \end{bmatrix}, \quad T_2(x) =
\begin{bmatrix} \langle x,g_1\rangle \\ \langle x,g_2\rangle \\
\sqrt{2}\langle x,g_3\rangle
\end{bmatrix}.
\end{eqnarray*}
The perturbation operator is $E = T_1 - T_2 $, and then,
\begin{eqnarray*}
E(x) =
\begin{bmatrix} \langle x,f_1 - g_1\rangle \\ 0 \\ 0 \end{bmatrix}
= \begin{bmatrix} -0.1 x_2 \\ 0 \\ 0
\end{bmatrix},
\end{eqnarray*}
and its operator norm is $\|E\| = 0.1$, which is smaller than
$\sqrt{A} = 1$ (lower $AR$-frame bound of coarse level), so the
proposition (\ref{Prop.3}) applies. $S_g = T_2^* T_2 = S_f +
\Delta S$ with
\begin{eqnarray*}
\Delta S = S_g - S_f = - T_1^* E - E^* T_1 + E^* E.
\end{eqnarray*}
This is a small $2 \times 2$ perturbation, because only $f_1$
changed slightly. We write
\begin{eqnarray*}
R_g = S_g^{-1} T_2^* = (S_f + \Delta S)^{-1} (T_1^* - E^*) = (I +
S_f^{-1} \Delta S)^{-1} (R_f - S_f^{-1} E^*).
\end{eqnarray*}
Since $\|S_f^{-1} \Delta S\|$ is small $(<1)$, then
\begin{eqnarray*}
R_g = \sum_{n=0}^{\infty} (-S_f^{-1} \Delta S)^n (R_f - S_f^{-1}
E^*).
\end{eqnarray*}
Let's reconstruct a signal $x = \begin{bmatrix} 1 \\ 2
\end{bmatrix}$, using only first-order Neumann approximation $(n=0)$,
\begin{eqnarray*}
R_g x \approx R_f x - S_f^{-1} E^* x.
\end{eqnarray*}
By a simple computation, we have
\begin{eqnarray*}
T_1^* x = \sum_{i=1}^3 \sqrt{\lambda_i} \langle x, f_i \rangle f_i
\approx \begin{bmatrix}3.1213 \\
4.1213\end{bmatrix},
\end{eqnarray*}
\begin{eqnarray*}
R_f x = S_f^{-1} T_1^* x =  R_f x = \frac{1}{3}
\begin{bmatrix}2.1213 \\ 5.1213\end{bmatrix} \approx
\begin{bmatrix}0.7071 \\ 1.7071\end{bmatrix}.
\end{eqnarray*}
This is the reconstruction from the original $AR$-frame. Also,
\begin{eqnarray*}
S_f^{-1} E^* x = \frac{1}{3} \begin{bmatrix}2 & -1 \\ -1 &
2\end{bmatrix} \begin{bmatrix}-0.2 \\ 0\end{bmatrix} =
\begin{bmatrix}-0.1333
\\ 0.0667\end{bmatrix}.
\end{eqnarray*}
Therefore,
\begin{eqnarray*}
R_g x \approx R_f x - S_f^{-1} E^* x =
\begin{bmatrix}0.8404 \\ 1.6404\end{bmatrix}.
\end{eqnarray*}
So the first-order corrected reconstruction $R_g x$ is slightly
closer to the original signal $x = \begin{bmatrix}1
\\ 2\end{bmatrix}$ compared to $R_f x$, compensating for the
small perturbation in $f_1$.
\end{exam}

\section{\textbf{Acknowledgment}}
The author would like to thank the anonymous reviewers for their
comments and suggestions, which improved the presentation of the
results. Also, the author acknowledge the use of generative AI for
assistance in reviewing and improving this manuscript.
\bigskip
\section{\textbf{Statements and Declarations}}
\textbf{Conflict of interest statement.} The author declares that
there are no conflicts of interest.

\textbf{Data availability.} No datasets were generated or analyzed
during the current study.

\textbf{Funding.} The author declare that no funds, grants, or
other support were received during the preparation of this
manuscript.




\begin{thebibliography}{15}

\bibitem{Balazs.2010}
Balazs P, Antoine JP, Grybos A. weighted and controlled frames:
Mutual relationhsip and first numerical properties, Int. J.
Wavelets Multiresolution. Inf. Process, 2010;8:109-132.
doi:10.1142/50219691310003377

\bibitem{Balazs.2026}
Balazs P, Corso R, Stoeva D. Weighted frames, Weighted lower semi
frames and unconditionally convergent multipliers, J. Fourier
Anal. Appl. 2026:32:Paper NO.24. doi:10.1007/s00041-025-10216-0

\bibitem{Benedetto.1995}
Teolis A, Benedetto JJ. Local frames and noise reduction, Signal
Process, 1995;369-387. doi:10.1016/0165-1684(95)00064-K

\bibitem{Benedetto.1998}
Benedetto JJ. Noice reduction in terms of the theory of frames.
In: wavelet analysis and applications, Academic San Diego,
1998;7:259-284. doi:10.1016/S1874-608X(98)80010-1

\bibitem{Cai.2012}
Cai JF, Osher S, Shen Z. Image restoration: total variation,
wavelet frames, and byond, J. Am. Math. Soc. 2012;25:1033-1089.


\bibitem{Christensen.2023}
Christensen O. An Introduction to Frames and Riesz Bases.
Birkh\"user, 2003.

\bibitem{Duffin.1952}
Duffin RJ, and Schaeffer AC. A class of nonharmonic Fourier
series, Tran. AM. Math. Soc. 1952;72:341-366. doi:10.2307/1990760

\bibitem{Eldar.2012}
Eldar YC, G. Kutyniok G. Compressed Sensing: Theory and
Applications. Cambridge, 2012.

\bibitem{Heil.2011}
Heil C. A Basis Theory Primer. Birkh\"user, 2011.

\bibitem{Mallat.1999}
Mallat S. A wavelet tour of signal processing, Academic, San
Diego, 1999.

\bibitem{Meyer.1997}
Meyer FG, Coifman RR. Brushlets: a tool for directinal image
analysis and image compression, Appl. Comput. Harm. Anal.
1997;4(2):147-187. doi:10.1006/acha.1997.0208

\bibitem{Rajupillai.2019}
Rajupillai K, Palaniammal S. Frame reconstruction with noise
reduction in Hilbert space and applicatin in communication
systems, Math. Comput. Sim., 2019;155:324-334.
doi:10.1016/j.matcom.2018.06.014

\bibitem{Richard}
Varga RS. Matrix Iterative Analysis, 2nd ed., Springer, 2000

\bibitem{Strohmer.2001}
Strohmer T. Approximation of dual gaber frames, window decay and
wireless communications, Appl. Comput. Harm. Anal.
2001;11(2):243-262. doi:10.1006/acha.2001.0357

\bibitem{Yousef}
Saad Y. Iterative Methods for Sparse Linear Systems, 2nd ed.,
SIAM, 2003.

\end{thebibliography}
\bibliographystyle{plain}
\end{document}